\DeclareMathAlphabet{\mathsfsl}{OT1}{cmss}{m}{sl}
\newtheorem{thm}{Theorem}[section]
\newtheorem{lem}[thm]{Lemma}
\newtheorem{prop}[thm]{Proposition}
\newtheorem{conj}[thm]{Conjecture}
\theoremstyle{definition}
\newtheorem{defn}[thm]{Definition}
\begin{document}

\title{A characterization of $T_{2g+1,2}$ among alternating knots}

\author{{\Large Yi NI}\\{\normalsize Department of Mathematics, Caltech, MC 253-37}\\
{\normalsize 1200 E California Blvd, Pasadena, CA
91125}\\{\small\it Emai\/l\/:\quad\rm yini@caltech.edu}}

\date{}
\maketitle

\begin{center}
{\it Dedicated to the memory of Professor John Horton Conway}
\end{center}

\ 

\begin{abstract}
Let $K$ be a genus $g$ alternating knot with Alexander polynomial $\Delta_K(T)=\sum_{i=-g}^ga_iT^i$. We show that if $|a_g|=|a_{g-1}|$, then $K$ is the torus knot $T_{2g+1,\pm2}$. This is a special case of the Fox Trapezoidal Conjecture.
The proof uses Ozsv\'ath and Szab\'o's work on alternating knots.
\end{abstract}

\section{Introduction}

Alternating knots have many good properties. For example, the information from the Alexander polynomial of an alternating knot $K$ determines the genus of $K$ and whether $K$ is fibered \cite{Crowell,Murasugi}. Even so, there are still some open problems about alternating knots. One of these problems is the following conjecture made by Fox \cite[Problem~12]{Fox}.

\begin{conj}[Fox Trapezoidal Conjecture]\label{conj:Fox}
Let $K$ be an alternating knot with normalized Alexander polynomial 
\begin{equation}\label{eq:Alexander}
\Delta_K(T)=\sum_{i=-g}^ga_iT^i,
\end{equation}
where $g$ is the genus of $K$.
 Then \[|a_i|\le|a_{i-1}| \text{ when }0< i\le g.\] Moreover, if $|a_i|=|a_{i-1}|$ for some $i$, then $|a_j|=|a_i|$ whenever $0\le j\le i$.
\end{conj}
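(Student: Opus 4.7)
The plan is to recast the conjecture in terms of knot Floer homology and exploit the special structure of $\widehat{HFK}$ for alternating knots. By Ozsv\'ath--Szab\'o's alternating-knot theorem, for any alternating $K$ one has $\widehat{HFK}(K, i; \mathbb{Q}) \cong \mathbb{Q}^{|a_i|}$, supported in a single Maslov grading (thinness). Thus Conjecture~\ref{conj:Fox} becomes the assertion that the rank function $i \mapsto \dim_{\mathbb{Q}} \widehat{HFK}(K,i)$ is non-increasing on $[0,g]$ and rigid once it drops below the central plateau, which suggests one should look for a canonical Alexander-grading-lowering injection
\[ \widehat{HFK}(K, i) \hookrightarrow \widehat{HFK}(K, i-1) \qquad (0 < i \le g). \]

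Such an injection should come from the bifiltered complex $CFK^{\infty}(K)$. Thinness should force a decomposition of $CFK^{\infty}(K)$ over $\mathbb{F}_2$ as a direct sum of one \emph{staircase} complex $S$ (whose contribution to each Alexander grading is $0$ or $1$) and several \emph{box} complexes, each contributing one generator to each of two adjacent Alexander gradings. The vertical differential on $CFK^{\infty}$ then pairs each box generator at height $i$ with a box generator at height $i-1$, and each descending step of $S$ pairs a staircase generator at height $i$ with one at height $i-1$; aggregating these pairings would yield the desired injection and hence the monotonicity. For the flatness clause, if the injection is a bijection at some $i_0$, then the staircase contribution must be constant on $[-i_0, i_0]$ and every box in that range must pair toward its lower side; combined with the symmetry $a_i = a_{-i}$ this forces $|a_j| = |a_{i_0}|$ for all $0 \le j \le i_0$. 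In the extreme case $i_0 = g$ this reduces to the main theorem stated in the abstract, which identifies $S$ with the full $(2g+1)$-step staircase of $T_{2g+1,\pm 2}$.

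The principal obstacle is constructing the decomposition of $CFK^{\infty}$ and the resulting injection canonically. Thinness of $\widehat{HFK}$ alone does not produce a canonical staircase-plus-boxes splitting, and a priori horizontal differentials could induce cancellations that mix Alexander levels in ways that defeat the monotonicity. Overcoming this will likely require either a concrete model for $CFK^{\infty}$ of an alternating knot (for example via alternating grid diagrams or the Manolescu--Ozsv\'ath--Szab\'o spectral sequence from Khovanov homology) or an independent numerical input from the branched double cover $\Sigma_2(K)$, which is a Heegaard-Floer $L$-space whose correction terms $d(\Sigma_2(K), \mathfrak{s})$ impose $|\det K|$ arithmetic constraints tying together the Alexander coefficients. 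This is exactly where the Fox conjecture has resisted attack and where genuinely new input beyond the Ozsv\'ath--Szab\'o rank theorem is needed; accordingly, one should expect the present paper to prove only the boundary case $i_0 = g$ rather than the full conjecture.
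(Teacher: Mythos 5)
The statement you were asked about is Conjecture~\ref{conj:Fox} itself, which the paper does not prove: it is stated as an open conjecture, and the paper's actual contribution (Theorem~\ref{thm:Alternating}) is only the boundary case, namely that $|a_g|=|a_{g-1}|$ forces $K$ to be $T_{2g+1,\pm2}$. So there is no proof in the paper to compare against, and your own text concedes that what you have written is a program rather than a proof. Judged as a proof attempt it has a genuine gap, and it is worth naming precisely where it fails. Even granting the staircase-plus-boxes decomposition of $CFK^{\infty}$ for thin knots (which is known, but is a nontrivial input beyond thinness of $\widehat{HFK}$), the ``aggregated pairing'' you describe is not an injection $\widehat{HFK}(K,i)\hookrightarrow\widehat{HFK}(K,i-1)$. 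A box supported in Alexander gradings $\{i+1,i\}$ contributes a generator at level $i$ whose differential points \emph{up} to level $i+1$; it pairs with nothing at level $i-1$. So the count is $|a_i|=s_i+b_i^{+}+b_i^{-}$, where $s_i\in\{0,1\}$ is the staircase contribution and $b_i^{\pm}$ count boxes with top (resp.\ bottom) at level $i$, and nothing in this bookkeeping prevents $b_i^{+}$ from being large while $b_{i-1}^{+}+b_{i-1}^{-}$ is small. Monotonicity of $|a_i|$ is exactly the arithmetic constraint that the decomposition does not supply, which is why the conjecture remains open.

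By contrast, the case the paper does handle needs none of this machinery at the level of $CFK^{\infty}$. For $i=g$ one has the Ozsv\'ath--Szab\'o inequality (\ref{eq:agBound}), $|a_{g-1}|\ge 2|a_g|-1$ with equality of the correction term only when $|\tau|=g$; combined with $a_g\ne0$, the hypothesis $|a_g|=|a_{g-1}|$ immediately forces $|a_g|=1$ and $|\tau|=g$. From $|a_g|=1$ the knot is fibered, and $\tau=g$ makes it strongly quasipositive (after mirroring); the identification with $T_{2g+1,2}$ is then a purely diagrammatic argument about Seifert circles in a reduced alternating diagram (Section~\ref{sect:SQPFalt}), not a Floer-theoretic one. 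If you want to salvage your approach, you should either restrict your claims to the $i=g$ case and run the inequality argument above, or explain what additional input (beyond thinness and the box decomposition) rules out the upward-pointing boxes that break your injection.
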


This conjecture was known for $2$--bridge knots \cite{Hartley} and alternating arborescent knots \cite{MurasugiAlgebraic}. Using Heegaard Floer homology, Ozsv\'ath and Szab\'o \cite{OSzAlternating} proved the first part of the conjecture for $i=g$. See (\ref{eq:agBound}) for the precise inequality. As a result, they proved the conjecture for genus--$2$ knots. 

In this paper, we will prove the second part of Conjecture~\ref{conj:Fox} for $i=g$. In this case, we will get a stronger conclusion.

\begin{thm}\label{thm:Alternating}
Let $K$ be an alternating knot with normalized Alexander polynomial given by (\ref{eq:Alexander}), where $g$ is the genus of $K$. If $|a_g|=|a_{g-1}|$, then $K$ or its mirror is the torus knot $T_{2g+1,2}$.
\end{thm}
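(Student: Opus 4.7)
The plan is to use Ozsv\'ath-Szab\'o's theorem in \cite{OSzAlternating}: for any alternating knot $K$, $\widehat{HFK}(K,i)$ is free of rank $|a_i|$ and concentrated in Maslov grading $i+\sigma(K)/2$, so the bigraded knot Floer homology is determined by $(\Delta_K,\sigma(K))$. Under this translation, the hypothesis $|a_g|=|a_{g-1}|$ becomes an equality of ranks of $\widehat{HFK}$ in the top two Alexander gradings.

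The first step is to examine the precise inequality (\ref{eq:agBound}) used to prove $|a_g|\le|a_{g-1}|$ and to extract its equality case. Since that inequality is derived by running the spectral sequence from $\widehat{HFK}(K)$ to $\widehat{HF}(S^3)=\mathbb{Z}$ and enforcing nonnegativity of the successive page ranks, saturation should force $|a_g|$ to be as small as possible, ideally $|a_g|=1$. By the Ghiggini-Ni fiberedness detection theorem, this would make $K$ a fibered alternating knot of genus $g$.

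The second step is to propagate the rigidity to every Alexander grading, so that $|a_i|=1$ (with alternating signs) for all $|i|\le g$ and hence $\Delta_K$ matches the Alexander polynomial of $T_{2g+1,2}$. A natural route is to work on the branched double cover $\Sigma(K)$, which is an $L$-space since $K$ is alternating: compute its Ozsv\'ath-Szab\'o correction terms from the knot Floer data of $K$ and show they coincide with those of the lens space $L(2g+1,2)$. Rigidity of $d$-invariants would then identify $\Sigma(K)\cong L(2g+1,2)$, and since $K$ is an alternating knot with lens-space branched double cover, the classification of two-bridge knots by their branched double covers forces $K=T_{2g+1,\pm 2}$.

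The main obstacle should be bridging the local hypothesis, which only constrains two of the $2g+1$ coefficients of $\Delta_K$, with the global conclusion, which pins them all down. Whether one implements the propagation by iterating a refined Ozsv\'ath-Szab\'o-type inequality in successively lower Alexander gradings, or by the correction-terms route through $\Sigma(K)$ sketched above, showing that the equality case of (\ref{eq:agBound}) forces the entire knot Floer complex into the $T_{2g+1,2}$ shape is the substantive content of the argument.
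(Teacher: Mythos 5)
Your first step tracks the paper's: the equality case of (\ref{eq:agBound}) together with $a_g\ne0$ forces $|a_g|=1$ \emph{and}, crucially, $|\tau|=g$, and Ghiggini--Ni then gives fiberedness. But your sketch never extracts the conclusion $|\tau|=g$, and that is the hinge of the whole argument: by Hedden's work and \cite[Corollary~1.7]{OSzAlternating}, $\tau=g=g(K)$ means the open book with binding $K$ supports the tight contact structure on $S^3$, i.e.\ $K$ (or its mirror) is a strongly quasipositive fibered knot. That positivity, not any further coefficient information, is what the paper feeds into the second half.

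Your second step --- propagating the constraint from the top two coefficients to all of $\Delta_K$ --- is where the proposal has a genuine gap, and you concede as much by calling it ``the substantive content.'' The route through correction terms of $\Sigma(K)$ faces concrete obstructions: (i) there is no way to compute the $d$-invariants of $\Sigma(K)$ from $\widehat{HFK}(K)$; for alternating knots they are governed by the Goeritz form of the diagram, not by the knot Floer data you have in hand; (ii) you do not even know $|H_1(\Sigma(K))|=\det K=\sum_i|a_i|$, since the hypothesis only controls $|a_g|$ and $|a_{g-1}|$ --- showing $\det K=2g+1$ is essentially equivalent to the conclusion you are after, so the argument is circular as stated; (iii) as a minor point, $\Sigma(T_{2g+1,2})=L(2g+1,1)$, not $L(2g+1,2)$. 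The paper closes this gap with no further Floer theory: a diagrammatic argument (Murasugi sum decompositions of a reduced alternating diagram along nested Seifert circles, Gabai's theorems on fibered Murasugi sums, and Goodman's criterion to rule out deplumbing a negative Hopf band from a page of a tight open book) shows that a strongly quasipositive fibered alternating knot is a connected sum of torus knots $T_{2n+1,2}$ with $n>0$, and then $|a_{g-1}|=1$ rules out more than one summand. Without either that combinatorial input or a worked-out substitute for it, the proposal does not bridge the local hypothesis and the global conclusion.
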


Our proof uses Ozsv\'ath and Szab\'o's work \cite{OSzAlternating}.

This paper is organized as follows.  In Section~\ref{sect:Thin}, we prove that if a knot $K$ has thin knot Floer homology, and $|a_g|=|a_{g-1}|$, then $K$ is a strongly quasipositive fibered knot.  In Section~\ref{sect:SQPFalt}, we prove that strongly quasipositive fibered alternating knots are connected sums of torus knots of the form $T_{2n+1,2}$. Hence we get a proof of Theorem~\ref{thm:Alternating}. 

\vspace{5pt}\noindent{\bf Acknowledgements.}\quad  The author was
partially supported by NSF grant number DMS-1811900. 


\section{Thin knots with $|a_g|=|a_{g-1}|$}\label{sect:Thin}

Let $K\subset S^3$ be a knot with knot Floer homology \cite{OSzKnot,RasThesis}
\[
\widehat{HFK}(S^3,K)=\bigoplus_{i,j\in\mathbb Z^2}\widehat{HFK}_j(S^3,K,i).
\]
We say the knot Floer homology is {\it thin}, if it is supported in the line 
\[
j=i-\tau,
\]
where $\tau=\tau(K)$ is the concordance invariant defined in \cite{OSz4Genus}.

By work of Hedden \cite{Hedden}, we will make the following definition of strongly quasipositive fibered knots. We do not need the original definition of strong quasipositivity in \cite{Rudolph}.

\begin{defn}
A {\it strongly quasipositive} fibered knot is a fibered knot $K\subset S^3$, such that the open book with binding $K$ supports the tight contact structure on $S^3$.
\end{defn}

Now we can state the main result we will prove in this section.

\begin{prop}\label{prop:Thin}
Let $K\subset S^3$ be a knot with thin knot Floer homology. Let the normalized Alexander polynomial be given by (\ref{eq:Alexander}). If $|a_g|=|a_{g-1}|$, then $K$ or its mirror is a strongly quasipositive fibered knot.
\end{prop}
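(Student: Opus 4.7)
My approach exploits the Alexander-filtration spectral sequence on $\widehat{CF}(S^3)$, whose $E_1$ page is $\widehat{HFK}(S^3,K)$ and which converges to $\widehat{HF}(S^3)=\mathbb{F}$. Since $K$ is thin, $\widehat{HFK}(S^3,K)$ lies on the single diagonal $j=i-\tau$, where $\tau=\tau(K)$. Consequently, for bigrading reasons, all higher differentials $d_r$ with $r\geq 2$ vanish, leaving only the $d_1$ differential. This produces a chain complex
\[
V_g \xrightarrow{d_1} V_{g-1} \xrightarrow{d_1} \cdots \xrightarrow{d_1} V_{-g},
\]
with $V_i=\widehat{HFK}(S^3,K,i)$ of dimension $|a_i|$ (thinness identifies $|a_i|$ with the rank), and total homology $\mathbb{F}$ concentrated in Alexander grading $\tau$. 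After possibly replacing $K$ by its mirror, I may assume $\tau\geq 0$.

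Using this complex I would first reprove Ozsv\'ath--Szab\'o's inequality $|a_g|\leq|a_{g-1}|$ and characterize its equality case. If $\tau<g$, the complex has no homology in Alexander grading $g$, so $d_1|_{V_g}$ must be injective; hence $|a_g|\leq|a_{g-1}|$, with equality iff $d_1|_{V_g}$ is an isomorphism. If $\tau=g$, the surviving class contributes a one-dimensional kernel, yielding the rank bound $|a_g|-1\leq|a_{g-1}|$.

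To exploit the equality hypothesis I invoke Petkova's structure theorem for thin knot Floer complexes: $CFK^\infty(K)$ decomposes as a staircase summand $\mathcal{S}(\tau)$ of length $2\tau+1$ plus a direct sum of box summands $\mathcal{B}(s)$, each box contributing $1$ to each of $|a_s|$ and $|a_{s-1}|$. Writing $b_s$ for the number of boxes at top $s$ (so $b_s=0$ for $s>g$), one has $|a_i|=\mathbf{1}_{|i|\leq\tau}+b_i+b_{i+1}$. The equation $|a_g|=|a_{g-1}|$ then translates into a combinatorial constraint on $(\tau,b_g,b_{g-1})$ that, combined with Alexander symmetry $b_{-s+1}=b_s$ and realizability constraints on thin $CFK^\infty$ arising from a knot in $S^3$, should force $\tau=g$ and $b_g=b_{g-1}=0$, i.e., $|a_g|=1$.

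Once $\tau=g$ and $|a_g|=1$ are in hand, thinness gives $\dim\widehat{HFK}(K,g)=1$, so by Ghiggini--Ni's theorem $K$ is fibered, and combined with $\tau=g$, Hedden's characterization of strongly quasipositive fibered knots via $\tau(K)=g(K)$ yields that $K$ is strongly quasipositive. The main obstacle I anticipate is the final step of the combinatorial analysis: ruling out the ``boxes at the top'' configurations that are combinatorially permitted by the structure theorem---either $\tau<g$ with $d_1|_{V_g}$ an isomorphism, or $\tau=g$ with $b_g>0$. These cases are not excluded by the chain-complex structure alone and should require finer input, presumably via the $\mathbb{F}[U]$-module structure of $HFK^-$ or a sharp version of Ozsv\'ath--Szab\'o's bound relating $|a_g|$ to $|\sigma(K)|/2=|\tau|$.
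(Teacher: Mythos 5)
Your overall strategy --- extract the constraint on $(\tau,|a_g|,|a_{g-1}|)$ from the local structure of a thin complex rather than from $HF^+(S^3_0(K))$ --- is a genuinely different route from the paper, which quotes the sharp inequality $|a_{g-1}|\ge 2|a_g|+c$ (with $c=-1,1,0$ according to whether $|\tau|=g$, $|\tau|=g-1$, or otherwise), derived from Ozsv\'ath--Szab\'o's computation of $HF^+$ of the zero-surgery, and then observes that together with $a_g\ne0$ this instantly forces $|a_g|=1$ and $|\tau|=g$. Your endgame (Ghiggini--Ni for fiberedness, Hedden's $\tau=g$ criterion for tightness of the open book) matches the paper. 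But as written your argument does not close, and you say so yourself: the spectral-sequence step only yields $|a_{g-1}|\ge|a_g|$ (or $|a_g|-1$ when $\tau=g$), which is consistent with $|a_g|=|a_{g-1}|$ being large, and you leave the exclusion of ``boxes at the top'' as an anticipated obstacle. That is the entire content of the proposition, so this is a genuine gap, not a detail.

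The gap is compounded by an error in your bookkeeping for the box summands. A box contributes four generators to $\widehat{HFK}$, distributed as $(1,2,1)$ over \emph{three} consecutive Alexander gradings $s+1,s,s-1$, not $(1,1)$ over two; so your formula $|a_i|=\mathbf{1}_{|i|\le\tau}+b_i+b_{i+1}$ is wrong. Ironically, the correct count is exactly what rescues the argument and renders your anticipated ``finer input'' unnecessary: writing $c_s$ for the number of boxes centered at grading $s$ (so $c_g=0$ since nothing lives above grading $g$), one gets $|a_g|=\mathbf{1}_{\tau=g}+c_{g-1}$ and $|a_{g-1}|=\mathbf{1}_{\tau\ge g-1}+2c_{g-1}+c_{g-2}$; the middle coefficient $2$ is precisely the source of the factor of $2$ in the paper's inequality (\ref{eq:agBound}). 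Setting $|a_g|=|a_{g-1}|$ gives $c_{g-1}+c_{g-2}=\mathbf{1}_{\tau=g}-\mathbf{1}_{\tau\ge g-1}\le 0$, hence $c_{g-1}=c_{g-2}=0$ and $\tau\ne g-1$, whence $|a_g|=\mathbf{1}_{\tau=g}$; since $a_g\ne0$ by the genus detection theorem, $\tau=g$ and $|a_g|=1$. If you fix the box contribution and run this computation, your proof becomes complete and is arguably more self-contained than the paper's, at the cost of invoking the structure theorem for thin complexes; as submitted, however, the decisive step is both missing and set up with an incorrect formula that would prevent it from ever closing.
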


Let $S_0^3(K)$ be the manifold obtained by $0$--surgery on $K$.
Ozsv\'ath and Szab\'o proved that if $\widehat{HFK}(S^3,K)$ is thin and $\tau(K)\ge0$, then
\begin{equation}\label{eq:HF+}
HF^+(S^3_0(K),s)\cong \mathbb Z^{b_s}\oplus(\mathbb Z[U]/U^{\delta(-2\tau,s)})
\end{equation}
for $s>0$,
where 
\[
\delta(-2\tau,s)=\max\{0,\lceil\frac{|\tau|-|s|}2\rceil\}
\]
and 
\[
(-1)^{s-\tau}b_s=\delta(-2\tau,s)-t_s(K)
\]
with 
\[
t_s(K)=\sum_{j=1}^{\infty}ja_{s+j}.
\]
See \cite[Theorem~1.4]{OSzAlternating} and the paragraph after it.

Using (\ref{eq:HF+}), one can deduce the following inequality as in \cite{OSzAlternating}:
\begin{equation}\label{eq:agBound}
|a_{g-1}|\ge2|a_g|+\left\{
\begin{array}{ll}
-1 &\text{if }|\tau|=g\\
1 &\text{if }|\tau|=g-1\\
0 &\text{otherwise.}
\end{array}
\right.
\end{equation}

\begin{proof}[Proof of Proposition~\ref{prop:Thin}]
It follows from \cite{OSzGenus} that $a_g\ne0$.
If $|a_g|=|a_{g-1}|$, then by (\ref{eq:agBound}) we must have
\[
|a_g|=1, |\tau|=g.
\]
By \cite{Gh,NiFibred}, $K$ is fibered. Replacing $K$ with its mirror if necessary, we may assume $\tau=g$. It follows from \cite[Corollary~1.7]{OSzAlternating} that the open book with binding $K$ supports the tight contact structure.
\end{proof}


\section{Strongly quasipositive fibered alternating knots}\label{sect:SQPFalt}

Suppose that $K$ is a fibered alternating link.
Let $\mathcal D\subset S^2$ be a reduced connected alternating diagram of $K$. Applying Seifert's algorithm to $\mathcal D$, we can get a Seifert surface $F$ which is a union of disks  and twisted bands corresponding to the crossings in $\mathcal D$. We call the disks {\it Seifert disks} with boundary {\it Seifert circles}, and call the twisted bands {\it Seifert bands}.
By \cite[Theorem~5.1]{GabaiFibred}, $F$ is a fiber of the fibration of $S^3\setminus K$ over $S^1$.  

Following \cite{GHY}, we say a Seifert circle is {\it nested}, if each of its complementary regions contains another Seifert
circle. It is well-known that $F$ decomposes as a Murasugi sum of two surfaces along a nested Seifert circle $C$ \cite{Murasugi,Stallings}. 
More precisely, let $D_1,D_2$ be the two disks bounded by $C$. Let $\mathcal B_i$ be the union of Seifert bands connecting $C$ to Seifert circles in $D_i$, $i=1,2$. We cut $F$ open along $\mathcal B_{3-i}\cap C$ to get a disconnected surface. Let $F_i$ be the component such that the projection of $\partial F_i$ is supported in $D_i$. Then $F$ is a Murasugi sum of $F_1$ and $F_2$. Gabai \cite{GabaiMurasugi} proved that $F$ is a fiber of a fibration of $S^3\setminus K$ if and only each $F_i$ is a fiber of a fibration of $S^3\setminus \partial F_i$, $i=1,2$.

\begin{defn}
If a diagram contains no nested Seifert circles, then this diagram is {\it special} as defined in \cite{Murasugi}.
\end{defn}

Suppose that $\mathcal D\subset S^2$ is a reduced connected special alternating diagram for a link $K$. Let $S_1,\dots,S_k$ be the Seifert circles in $\mathcal D$. Since $\mathcal D$ is special, these Seifert circles bound disjoint disks $D_1,\dots,D_k$.
We color the complementary regions of $\mathcal D$ by two colors black and white, so that two regions sharing an edge have different colors. The coloring convention is that the disks $D_1,\dots,D_k$ have the black color. Clearly, there are no other black regions.
We will construct the black graph $\Gamma_B$ and the white graph $\Gamma_W$ as usual. Namely, the vertices in $\Gamma_B$ (or $\Gamma_W$) are the black (or white) regions, and the edges correspond to the crossings. These two graphs are embedded in $S^2$ as a pair of dual graphs.
We also construct the reduced black graph $\Gamma_B^r$ by deleting all but one edges connecting two vertices $v_i$ and $v_j$ if there is any edge connecting them.

The following proposition can be found in \cite[Propositions~13.24 and~13.25]{BZ}.

\begin{prop}\label{prop:Graph}
Suppose that $\mathcal D\subset S^2$ is a reduced connected special alternating diagram for a fibered link $K$, then all but one vertices in $\Gamma_W$ have valence $2$. As a result, $K$ is a connected sum of torus links 
\[K=\#_{i=1}^{\ell}T_{k_i,2}.\]
\end{prop}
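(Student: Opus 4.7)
The plan is to translate the fiberedness of $K$ into a combinatorial constraint on $\Gamma_W$ via the Alexander polynomial, and then to read off the connected sum decomposition of $K$ from that constraint.

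As setup, I would observe that because $\mathcal D$ is special, the Seifert disks are pairwise disjoint and the Seifert surface $F$ produced by Seifert's algorithm is a disk-band surface that deformation retracts onto the Seifert graph $\Gamma_B$. In particular $H_1(F)\cong H_1(\Gamma_B)$ and $\chi(F)=|V(\Gamma_B)|-|E(\Gamma_B)|$. The planar embedding $\Gamma_B\subset S^2$ has $\Gamma_W$ as its dual, so the valence of a vertex $v\in\Gamma_W$ equals the length of the corresponding face of $\Gamma_B$. Reducedness of $\mathcal D$ forces $\Gamma_B$ and $\Gamma_W$ to contain no loops or bridges.

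Next, I would use the Alexander polynomial to constrain $\Gamma_W$. Because $\mathcal D$ is special alternating, all crossings share a sign, and the Seifert form on $H_1(F)$, written in the cycle basis coming from $\Gamma_B$, admits a clean expression as a signed Laplacian-type matrix built from $\Gamma_B$ (equivalently $\Gamma_W$). Expanding $\Delta_K(T)=\det(TV-V^T)$ in this basis identifies the leading coefficient of $\Delta_K$ with a nonnegative combinatorial invariant of $\Gamma_W$; the key claim is that this invariant equals $1$ precisely when $\Gamma_W$ has at most one vertex of valence different from $2$, and is strictly larger otherwise. Since $K$ is fibered, $\Delta_K(T)$ is monic, which forces the valence statement of the proposition. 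This step is the \emph{main obstacle}: one must carry out the combinatorial identification of the top coefficient for special alternating links and track signs so that it appears as a nonnegative count, vanishing only in the claimed bouquet-of-cycles case. Should the direct matrix computation prove too unwieldy, a fall-back strategy is to feed any cut-vertex of $\Gamma_W$ into a Murasugi-sum decomposition of $F$ and invoke Gabai's theorem (as in \cite{GabaiMurasugi}) to induct on crossing number, reducing to the $2$-connected case where the combinatorial identification is much simpler.

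Finally, I would translate the graph structure back to a connected sum decomposition of $K$. Once all but one vertex of $\Gamma_W$ has valence $2$, the valence-$2$ vertices are arranged into cycles meeting at the unique high-valence vertex $v$. Each such cycle of length $n_i$ corresponds to a parallel bundle of $n_i$ bands in $\mathcal D$ between two Seifert disks, and the subdiagram of each bundle is exactly the standard special alternating diagram of $T_{n_i,2}$. Gluing along the white region $R_v$ at $v$ realises a connected sum of these standard diagrams, yielding $K=\#_{i=1}^{\ell}T_{k_i,2}$ as required. Everything beyond the Alexander polynomial step is Euler-characteristic bookkeeping on $S^2$ together with a direct translation between the band-bundle structure of $\mathcal D$ and the structure of $\Gamma_W$ forced by monicness of $\Delta_K$.
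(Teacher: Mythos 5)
Your overall strategy --- fiberedness forces $\Delta_K$ to be monic, and for special alternating diagrams the leading coefficient has a combinatorial interpretation in terms of the checkerboard graphs that pins down the valences in $\Gamma_W$ --- is exactly the classical route, and it is essentially what the paper relies on: the paper does not prove this proposition at all, but cites it as Propositions~13.24 and~13.25 of Burde--Zieschang, where this program is carried out. So the plan is sound in outline. The problem is that your write-up does not actually execute the one step that carries all the mathematical content. You assert that the leading coefficient of $\det(TV-V^T)$ ``is a nonnegative combinatorial invariant of $\Gamma_W$'' equal to $1$ precisely when at most one vertex of $\Gamma_W$ has valence different from $2$, and you yourself label establishing this as ``the main obstacle.'' That claim is not a routine expansion: $\det(V+V^T)$ is the Goeritz/Laplacian determinant (counting spanning trees, i.e.\ the knot determinant), but the leading coefficient is $\det V$, whose combinatorial meaning for special alternating links is a genuine theorem (it is the substance of the Murasugi/Crowell/Burde--Zieschang analysis), not something that falls out of writing $V$ as a ``signed Laplacian-type matrix.'' Both directions are needed: that the count is $1$ in the bouquet-of-multi-edges case, and, harder, that it is at least $2$ whenever two or more white vertices have valence $\ne 2$. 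Without this, the proposal is a plan, not a proof.

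The proposed fall-back is also not convincing as stated. A cut vertex of $\Gamma_W$ in a special diagram corresponds to a connected-sum point, not to a nested Seifert circle, so there is no Murasugi-sum decomposition available beyond splitting off connect summands; reducing to the case where $\Gamma_W$ is $2$-connected does not make the determinant identification ``much simpler'' --- that is precisely where the content lives (one must show that a $2$-connected $\Gamma_B$ which is not a single family of parallel edges forces leading coefficient at least $2$). The final paragraph (translating ``all but one white vertices have valence $2$'' into a tree of parallel-band families and hence a connected sum of $T_{k_i,2}$'s) is fine and matches Lemma~3.4 of the paper. To make this a complete argument you would either need to prove the leading-coefficient identification in detail or, as the paper does, invoke the Burde--Zieschang references directly.
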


From Proposition~\ref{prop:Graph}, it is not hard to get the following characterization of $\mathcal D$ in terms of $\Gamma_B^r$.

\begin{lem}\label{lem:Tree}
Under the same assumptions as in Proposition~\ref{prop:Graph}, the graph $\Gamma_B^r$ is a tree.
\end{lem}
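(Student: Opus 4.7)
My plan is to dualize and work directly with $\Gamma_B$. Proposition~\ref{prop:Graph} says that all but one vertex of $\Gamma_W$ has valence $2$; this translates to the statement that all but one face of $\Gamma_B$ is a \emph{bigon}, i.e., a face bounded by two parallel edges with the same pair of endpoints. Since the diagram $\mathcal D$ is reduced, $\Gamma_B$ has no loops, so each bigon is bounded by two honestly distinct parallel edges between two distinct vertices.

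The central operation is to iteratively \emph{collapse} bigons: at each step, pick a bigon face and delete one of its two bounding edges. This operation reduces the numbers of edges and of faces each by $1$, because the bigon merges with the face on the other side of the deleted edge. Crucially, the structural hypothesis is preserved under collapsing: the merged face has degree $2 + d - 2 = d$, equal to that of the neighboring face, so if the neighbor was a bigon the merged face is a bigon, and if the neighbor was the unique non-bigon face the merged face is still the unique non-bigon face. Hence a bigon face is available at every stage until only one face remains.

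After $F(\Gamma_B) - 1$ such collapses, the resulting graph $G$ sits on the vertex set of $\Gamma_B$, is planar with exactly one face, and by Euler has $V(\Gamma_B) - 1$ edges. A planar graph with one face has no cycles (since any cycle would bound an additional face), so $G$ is a tree; in particular $G$ has no parallel edges, which forces $G$ to agree with the underlying simple graph $\Gamma_B^r$. Connectedness of $\Gamma_B^r$ is inherited from that of $\Gamma_B$, which comes from the connectedness of $\mathcal D$. Therefore $\Gamma_B^r$ is a tree.

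The main subtlety is the degree-preservation observation that keeps a bigon face available at every stage of the collapse; once this is in hand, the conclusion is a routine consequence of Euler's formula and Proposition~\ref{prop:Graph}.
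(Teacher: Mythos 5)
Your argument is correct, and it reaches the conclusion by a genuinely different (more constructive) route than the paper. Both proofs hinge on the same dual translation of Proposition~\ref{prop:Graph} --- ``all but one vertex of $\Gamma_W$ has valence $2$'' becomes ``all but one face of $\Gamma_B$ is a bigon'' --- but the paper then argues directly on $\Gamma_B^r$: since $\Gamma_B^r$ is simple with at least three vertices, none of its complementary regions is a bigon, each such region forces a white vertex of valence $>2$, and Proposition~\ref{prop:Graph} allows at most one of those; hence $\Gamma_B^r$ has at most one complementary region and is a tree. Your bigon-collapse induction replaces that two-line face count with an explicit reduction of $\Gamma_B$ to a spanning tree, paying for the extra length with a more transparent bookkeeping of how $\Gamma_B$ degenerates to $\Gamma_B^r$; the degree-preservation observation that keeps a bigon available at every stage is exactly the right invariant, and Euler's formula closes the argument cleanly.

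One step should be made explicit: the inference ``$G$ has no parallel edges, which forces $G=\Gamma_B^r$'' is not a consequence of simplicity alone, since a priori $G$ could be a proper spanning subgraph of $\Gamma_B^r$ missing some adjacencies (and then $\Gamma_B^r$ being a tree would not follow from $G$ being one). What rescues it is the construction itself: each collapse deletes an edge that is, at that moment, parallel to a surviving edge of the same bigon, so the adjacency relation of the current graph never changes. By induction $G$ has the same adjacency relation as $\Gamma_B$, and a simple graph on the vertex set of $\Gamma_B$ with that adjacency relation is $\Gamma_B^r$. With that one sentence added, the proof is complete. (You may also wish to note that each deleted edge lies on a $2$-cycle and hence is not a bridge, so connectivity --- needed for Euler's formula at each stage --- is preserved throughout the collapse.)
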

\begin{proof}
Since $D$ is connected, $\Gamma_B^r$ is also connected.
If $\Gamma_B^r$ contains only two vertices, there is exactly one edge by the definition of $\Gamma_B^r$, so our conclusion holds. From now on, we assume $\Gamma_B^r$  has at least three vertices.
 Let $R$ be a complementary region of $\Gamma_B^r$, then it is not a bigon since any two vertices in $\Gamma_B^r$ are connected by at most one edge and $\Gamma_B^r$  has at least three vertices.
Let $v$ be the vertex corresponding to $R$ in $\Gamma_W$, then $v$ has valence $>2$. By Proposition~\ref{prop:Graph}, $\Gamma_B^r$ has at most one complementary region, which means that $\Gamma_B^r$ is a tree.
\end{proof}

\begin{lem}\label{lem:TwoEdges}
Under the same assumptions as in Proposition~\ref{prop:Graph}, if two vertices in $\Gamma_B$ are connected by an edge, then they are connected by at least two edges.
\end{lem}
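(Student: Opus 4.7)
My plan is to argue by contradiction. Suppose two vertices $v_1, v_2 \in \Gamma_B$ are joined by exactly one edge $e$; I will show that the corresponding crossing $c$ of $\mathcal{D}$ is nugatory, contradicting the reducedness of $\mathcal{D}$.

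The first step is to observe that $e$ is a bridge in $\Gamma_B$. By Lemma~\ref{lem:Tree}, $\Gamma_B^r$ is a tree, so its unique edge between $v_1$ and $v_2$ is a bridge in $\Gamma_B^r$. Any path in $\Gamma_B$ from $v_1$ to $v_2$ projects to a walk in $\Gamma_B^r$ from $v_1$ to $v_2$, because parallel families in $\Gamma_B$ collapse to single $\Gamma_B^r$-edges. Since $e$ is by hypothesis the only $\Gamma_B$-edge between $v_1$ and $v_2$, every such path must use $e$, so deleting $e$ from $\Gamma_B$ disconnects it; thus $e$ is a bridge.

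Next, I invoke planar duality: $\Gamma_B$ and $\Gamma_W$ are planar duals on $S^2$, and a bridge of a planar graph corresponds to a self-loop of the dual. Hence the dual edge $e^* \in \Gamma_W$ is a self-loop at some white vertex $W$. Interpreted in the diagram $\mathcal{D}$, this says precisely that the two opposite white quadrants at the crossing $c$ both lie in the single white region $W$ (note that the black alternative cannot occur, since specialness of $\mathcal{D}$ makes $D_{v_1}$ and $D_{v_2}$ disjoint disks in $S^2$, hence distinct faces of $\Gamma_W$).

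Finally, I pick a simple arc in the interior of $W$ joining the two white quadrants near $c$ and close it up by passing straight through $c$. The resulting simple closed curve $\gamma \subset S^2$ meets $K$ transversely at exactly the one point $c$, so $c$ is a nugatory crossing, contradicting the assumption that $\mathcal{D}$ is reduced. The step that most requires care is the planar-duality translation from \emph{bridge in $\Gamma_B$} to \emph{self-loop in $\Gamma_W$}, but this is a standard fact about dual planar graphs, and its geometric content in our setting is precisely the existence of the nugatory curve $\gamma$.
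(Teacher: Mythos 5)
Your proof is correct and follows essentially the same route as the paper's (much terser) argument: the paper simply notes that, by Lemma~\ref{lem:Tree}, a unique crossing joining $D_i$ to $D_j$ would make $\mathcal D$ non-reduced, and your write-up supplies exactly the missing details — the unique edge is a bridge of $\Gamma_B$ because $\Gamma_B^r$ is a tree, hence a loop in the dual $\Gamma_W$, hence the crossing is nugatory. No gaps.
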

\begin{proof}
Using Lemma~\ref{lem:Tree}, if $D_i$ and $D_j$ are connected through only one crossing, then $\mathcal D$ is not reduced, a contradiction.
\end{proof}

We say two Seifert bands are {\it parallel} if they connect the same
two Seifert disks. The following lemma is well-known. See, for example, \cite[Proposition~5.1]{GHY}.

\begin{lem}\label{lem:Deplumbing}
If two Seifert bands are parallel, then we can deplumb a Hopf band from $F$. The resulting surface can be obtained by removing one of the bands from $F$.
\end{lem}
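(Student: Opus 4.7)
The plan is to exhibit $F$ as a Hopf plumbing (Murasugi sum with a Hopf band) whose complementary summand is exactly $F$ with one of the two parallel bands removed. I will write $b_1,b_2$ for the two parallel Seifert bands, and $D,D'$ for the two Seifert disks they connect.

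First I would construct the Hopf band. I would choose thin rectangles $R\subset D$ and $R'\subset D'$, each having two opposite edges on the traces $b_1\cap D$ (resp.\ $b_1\cap D'$) and $b_2\cap D$ (resp.\ $b_2\cap D'$). Inside $b_1$ I would choose a thin sub-rectangle $b_1'$ joining the edges $R\cap b_1$ and $R'\cap b_1$ and inheriting the half-twist of $b_1$. Setting
\[
A := R\cup b_1'\cup R'\cup b_2,
\]
the surface $A$ is a cycle of four rectangles glued along their short sides, and hence an embedded annulus sitting inside $F\subset S^3$. Its total twisting is the sum of the half-twists carried by $b_1$ and $b_2$; in a reduced alternating diagram these half-twists have the same sign, so $A$ is a (positive or negative) Hopf band.

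Next I would identify the plumbing data. Setting $F':= F\setminus\operatorname{int}(b_2)$, a direct computation gives
\[
F'\cap A \;=\; R\cup b_1'\cup R' \;=:\; P,
\]
a disk (rectangle). Its boundary consists of four arcs alternating between $\partial A$ and $\partial F'$: the two \emph{short} sides of $P$ lie on $b_2\cap D$ and $b_2\cap D'$, which are boundary arcs of $F'$ created by deleting $\operatorname{int}(b_2)$, while the two \emph{long} sides of $P$ lie on $\partial A$. Therefore $P$ is a plumbing square. A small $2$-sphere $S\subset S^3$ through $P$, obtained by thickening $P$ slightly to one side, will separate the Hopf band $A$ (minus a small collar) from $F'$ (minus a small collar), exhibiting $F=F'\cup A$ as the Hopf plumbing $F'\ast_P A$. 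The deplumbed surface is $F' = F\setminus\operatorname{int}(b_2)$, which is $F$ with the band $b_2$ removed, as claimed.

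The hard step will be verifying that $A$ really is a Hopf band rather than some other annulus in $S^3$. This reduces to arguing that in a reduced alternating Seifert diagram two parallel bands always carry same-signed half-twists, so that the twists add to a full twist rather than cancelling. I would handle this by a local checkerboard-coloring computation at the two crossings, using reducedness of $\mathcal D$ to rule out the cancelling configuration. Once this sign fact is in hand, everything else is routine local bookkeeping in a regular neighborhood of $b_1\cup b_2\cup D\cup D'$.
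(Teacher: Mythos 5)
The paper does not actually prove this lemma: it is asserted as well known, with a pointer to \cite[Proposition~5.1]{GHY}, so your construction is not competing with an argument in the text. Your skeleton --- form the annulus $A=R\cup b_1'\cup R'\cup b_2$, note $F'\cap A=P$ with the edges of $P$ alternating correctly between $\partial A$ and $\partial F'$, and exhibit $F=F'\cup_P A$ as a plumbing --- is exactly the standard construction underlying the cited result. But the two steps you defer are where all the content lies, and as described neither is secure. On the sign claim: you are right that this is essential (if the half-twists of $b_1$ and $b_2$ have opposite signs, $A$ is an untwisted annulus, not a Hopf band, and the lemma is simply false; moreover Lemma~\ref{lem:NoParallel} uses the sign of the resulting Hopf band). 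However, reducedness of $\mathcal D$ is irrelevant to this, and no purely local computation at the two crossings can decide it --- for a general non-alternating diagram two parallel bands really can carry opposite half-twists. The hypothesis doing the work is that $\mathcal D$ is alternating (the standing assumption of Section~\ref{sect:SQPFalt}), and the clean argument is global: by Murasugi's decomposition, an alternating diagram desums along its nested Seifert circles into special alternating pieces, in each of which all crossings have the same sign; two bands joining the same pair of Seifert circles lie in one such piece, hence twist the same way.

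On the sphere: thickening $P$ ``slightly to one side'' yields a collar $P\times[0,\epsilon]$ whose interior meets no part of $F$; in particular it does not contain $b_2$, so it does not separate $A$ from $F'$ and exhibits no Murasugi sum. The ball $B$ realizing the plumbing must contain $b_2$, which stretches across the region of the projection sphere between $b_1$ and $b_2$. Since ``parallel'' only means the bands join the same two Seifert disks, that region may contain other Seifert circles and bands of $F'$, and then the naive ball sweeping from $b_1$ across to $b_2$ meets $F'$ in more than $P$. You must either choose the pair of parallel bands so that the region between them is free of the rest of the diagram (adjacent parallel bands, together with an argument that such a pair exists), or show that $b_2$ can be isotoped, in the complement of $F'$ and rel its attaching arcs, into a collar of $P$ with the framing of a Hopf band. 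This is the genuine geometric step of the lemma, not routine bookkeeping, and it is precisely what the reference to \cite{GHY} is supplying for the paper.
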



\begin{lem}\label{lem:NoParallel}
Let $K$ be a strongly quasipositive fibered alternating knot, and let $\mathcal D$ be a reduced connected alternating diagram for $K$.
Let $C$ be a nested Seifert circle. If $C$ is connected to two pairs of parallel bands, then these two pairs of bands are on the same side of $C$.
\end{lem}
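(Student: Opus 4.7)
I will argue by contradiction. Suppose the two pairs of parallel bands lie on opposite sides of $C$, say $\{b_1,b_2\}$ on the inside and $\{b_3,b_4\}$ on the outside. Since $C$ is nested, $F$ admits a Murasugi sum decomposition $F=F_1*_C F_2$ with $F_1$ the inside piece and $F_2$ the outside piece, so that $\{b_1,b_2\}\subset F_1$ and $\{b_3,b_4\}\subset F_2$. Because $K$ is strongly quasipositive fibered, $F$ supports the tight contact structure on $S^3$; by Torisu's theorem on the behavior of contact structures under Murasugi sum, both $F_1$ and $F_2$ then support the tight contact structure on $S^3$ as well.

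Next I apply Lemma~\ref{lem:Deplumbing} to each pair. This presents $F_i = F_i' *_{\alpha_i} A_i$ as a Hopf plumbing, where $A_i$ is the Hopf annulus arising from the pair of parallel bands on side $i$. Applying Torisu's theorem once more to each of these sub-plumbings, each $A_i$ itself supports the tight contact structure on $S^3$. A Hopf annulus supports the tight contact structure precisely when it is positive, so both $A_1$ and $A_2$ are positive Hopf annuli.

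Translating this back to $\mathcal{D}$, a positive Hopf annulus built from two parallel bands forces the two corresponding crossings of $\mathcal{D}$ to be positive (and in a reduced alternating diagram two parallel bands must carry crossings of the same sign anyway, since an opposite-sign pair would yield a Reidemeister~II reduction). Hence all four crossings arising from $b_1,b_2,b_3,b_4$ are positive. I would then derive the contradiction from the cyclic arrangement of the four attachment points $p_1,\dots,p_4$ on $C$. In $\mathcal{D}$ the over/under data at these points alternates as one traverses $C$. On the other hand, the rule relating ``positive crossing'' to the over/under status of $C$ at that crossing carries opposite conventions on the inside and outside of $C$, because passing from an inside neighbor to an outside neighbor reverses the local geometry of the Seifert smoothing at $C$. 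A case check through the admissible cyclic orderings of $p_1,\dots,p_4$ then shows that no arrangement is compatible with positivity of all four crossings together with the alternation along $C$.

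The main obstacle is this last step. The sign rule tying together ``positive crossing,'' ``$C$ goes over,'' and ``inside versus outside'' in an alternating diagram must be extracted via a careful local analysis of Seifert smoothings and orientations, and then every admissible cyclic ordering of the four attachment points must be eliminated in turn, taking into account that the two pairs need not be adjacent on $C$ (they may be interleaved by attachments from the opposite side).
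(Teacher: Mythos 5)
Your contact-geometric scaffolding is sound and essentially parallels the paper's: the paper deplumbs a negative Hopf band directly from $F$ and invokes Goodman's criterion \cite{Goodman} that such an open book supports an overtwisted contact structure, which is interchangeable with your route through Torisu's theorem (a Murasugi sum supports the tight structure iff both summands do, and the negative Hopf band supports the overtwisted one). Up to that substitution, reducing the lemma to the claim that the two Hopf annuli cannot both be positive is exactly the intended reduction, and your parenthetical that parallel bands in a reduced alternating diagram carry same-sign crossings is correct.

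The difficulty is that the step you defer --- showing that a pair of parallel bands inside $C$ and a pair outside $C$ cannot both yield positive Hopf bands when $\mathcal D$ is alternating --- is the entire content of the lemma, and you do not prove it; you only describe a case analysis you would attempt. This is not a routine verification: the forbidden configuration with both Hopf bands positive does occur for suitable \emph{non-alternating} diagrams (e.g.\ a diagram of $T_{2,3}\# T_{2,3}$, which is strongly quasipositive and fibered, drawn with the middle Seifert circle nested), so the argument must genuinely exploit alternation, and a sign-convention slip here proves a false statement. What you need is precisely the fact encoded in the paper's Figure~\ref{fig:TwoSides}: in an alternating diagram the crossings joining a nested circle $C$ to its inside neighbors and those joining it to its outside neighbors carry opposite signs (this is why the alternating version of ``two trefoil patterns plumbed across a nested circle'' is the square knot rather than the granny knot), so the two deplumbed Hopf bands automatically have opposite signs. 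Until you carry out that local analysis --- using that consecutive crossings along $C$ alternate over/under while the Seifert smoothings at inside and outside attachments of $C$ are mirror images of one another --- your argument is incomplete at its crucial point.
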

\begin{proof}
If $C$ is connected to two pairs of parallel bands on different sides of $C$, then we can deplumb a negative Hopf band from $F$. See Figure~\ref{fig:TwoSides}. Hence the open book with page $F$ supports an overtwisted contact structure \cite[Lemma~4.1]{Goodman}, a contradiction.
\end{proof}

\begin{figure}[ht]
\begin{center}
\scalebox{0.5}{
\begin{tikzpicture}
\begin{knot}[clip width=6, consider self intersections,]
\strand
(0,2)  ..  controls  (0.2,4) and (0.3,5) ..  
(1,5)  ..  controls  +(1,0) and +(-1,0)  ..
(3,6)  ..  controls  +(1,0) and +(-1,0)  ..
(5,5)  ..  controls  +(1,0) and +(-1,0)  ..
(7,6)  ..  controls  (8,6.2) and (8,6.8)   ..
(7,7)  ..  controls  (5,7.5) and (2,7.5) ..
(1,7)  ..  controls  (0,6.8) and (0,6.2) ..
(1,6)  ..  controls  +(1,0) and +(-1,0)  ..
(3,5)  ..  controls  +(1,0) and +(-1,0)  ..
(5,6)  ..  controls  +(1,0) and +(-1,0)  ..
(7,5)  ..  controls  (8,4.5) and (8,1.5)  ..
(7,1)  ..  controls  +(-1,0) and +(1,0) ..
(5,2)  ..  controls  +(-1,0) and +(1,0) ..
(3,1)  ..  controls  +(-1,0) and +(1,0) ..
(1,2)  ..  controls  (0.5,2.2) and (0.5,2.8) ..
(1,3)  ..  controls  (2,3.5) and (5,3.5) ..
(7,3)  ..  controls  (7.5,2.8) and (7.5,2.2) ..
(7,2)  ..  controls  +(-1,0) and +(1,0) ..
(5,1)  ..  controls  +(-1,0) and +(1,0) ..
(3,2)  ..  controls  +(-1,0) and +(1,0) ..
(1,1)  ..  controls  (0.2,1.2) ..
(0,2) ;
\flipcrossings{2,5};
\draw[red, dashed] (3,1.5) circle [radius=1];
\draw[blue, dashed] (3,5.5) circle [radius=1];
\end{knot}
\end{tikzpicture}
}
\end{center}
\caption{\label{fig:TwoSides}If two collections of parallel bands are on different sides of a nested Seifert circle, we can deplumb a positive Hopf band and a negative Hopf band. 
The two dashed circles are the cores of the Hopf bands.}
\end{figure}
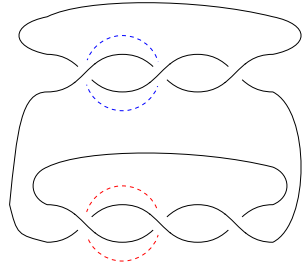

\begin{prop}\label{prop:SQP}
Let $K$ be a strongly quasipositive fibered alternating knot. Then $K$ is a connected sum of torus knots of the form $T_{2n+1,2}$ for $n>0$.
\end{prop}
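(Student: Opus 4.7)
I would proceed by strong induction on the number of Seifert circles in a reduced, connected alternating diagram $\mathcal{D}$ of $K$, the base case (one Seifert circle) being the unknot. Suppose first that $\mathcal{D}$ is special. Then Proposition~\ref{prop:Graph} gives $K=\#_{i=1}^{\ell}T_{k_i,2}$, and it remains to constrain the $k_i$. Because $K$ is a knot, each $T_{k_i,2}$ must itself be a knot, forcing every $k_i$ to be odd. Because $\mathcal{D}$ is reduced, no $k_i$ can equal $\pm 1$ (such a factor would correspond to a nugatory crossing or a trivial summand). Finally, because $K$ is strongly quasipositive and the tight contact structure on $S^3$ is prime, each summand $T_{k_i,2}$ must itself be strongly quasipositive: via the Giroux correspondence combined with Torisu's observation that Murasugi sum of open books corresponds to contact connected sum, tightness pulls back through the connect-sum decomposition. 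Strong quasipositivity of $T_{k_i,2}$ forces $k_i>0$, so $k_i=2n_i+1$ with $n_i\ge 1$.

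Next suppose $\mathcal{D}$ has a nested Seifert circle $C$. I would decompose $F$ as a Murasugi sum $F=F_1 * F_2$ along $C$, in the manner described just before Proposition~\ref{prop:Graph}. By \cite{GabaiMurasugi}, each $F_i$ is the fiber surface of a fibered link $K_i=\partial F_i$ admitting its own reduced, connected alternating diagram of strictly smaller complexity, and the same contact-geometric reasoning as above shows each $K_i$ is strongly quasipositive. Applying the inductive hypothesis (extended verbatim to alternating fibered links, since the argument above is insensitive to the knot-versus-link distinction except when odd parity is read off) then expresses each $K_i$ as a connected sum of torus links of the form $T_{2n+1,2}$.

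The main obstacle is the reassembly step: a Murasugi sum along a nested Seifert circle is in general a plumbing rather than a connect sum, so one must verify that in the SQP setting it collapses to a connect sum. Here Lemma~\ref{lem:NoParallel} is crucial, since it forces the parallel bands incident to $C$ to lie on a single side. After first deplumbing all parallel pairs at $C$ via Lemma~\ref{lem:Deplumbing}, each Hopf band being positive (else SQP fails by \cite{Goodman} as in the proof of Lemma~\ref{lem:NoParallel}), the polygon along which $F_1$ and $F_2$ are summed should reduce to a bigon, making the Murasugi sum a genuine connected sum of the boundary links. Carrying out this reduction rigorously, and confirming that the connected-sum structure survives the induction, is where the delicate combinatorial bookkeeping lies.
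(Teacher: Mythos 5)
Your handling of the special case agrees with the paper's (Proposition~\ref{prop:Graph} plus the observation that strong quasipositivity forces each summand $T_{k_i,2}$ to be positive), but the nested case is where the real content of the proposition lies, and there your argument has a genuine gap --- one you yourself flag as ``delicate combinatorial bookkeeping,'' though it is more than bookkeeping. A Murasugi sum along a nested Seifert circle $C$ is performed along a $2k$-gon whose shape is governed by how the bands attached to $C$ from its two sides interleave around $C$; nothing in your sketch forces this polygon to degenerate to a bigon. Moreover, deplumbing Hopf bands via Lemma~\ref{lem:Deplumbing} changes the link, so after deplumbing you are no longer decomposing $K$ itself, and you cannot simply transfer a connected-sum structure back. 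As written, your induction would at best exhibit $K$ as a Murasugi sum of connected sums of $T_{2n+1,2}$'s, and upgrading that Murasugi sum to a connected sum is exactly the unproved step.

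The paper sidesteps reassembly entirely by showing that the nested case \emph{cannot occur}, so that $\mathcal{D}$ is forced to be special and only your first paragraph is needed. Concretely: take a maximal collection $C_1,\dots,C_m$ of extremal nested Seifert circles, with $R_i$ the complementary region of $C_i$ containing no other nested circle, and Murasugi-desum along all of them to get special pieces $\mathcal{D}_1,\dots,\mathcal{D}_m$ inside the $R_i$ and a remaining piece $\mathcal{D}^*$. Each piece is a reduced special alternating diagram of a fibered link by Gabai's theorem, so Lemma~\ref{lem:TwoEdges} applies: any two adjacent Seifert circles in a special piece are joined by at least two parallel bands. Since $C_i$ is nested, it is adjacent to Seifert circles on both of its sides, each adjacency living in a special piece; hence $C_i$ carries a parallel pair on each side, contradicting Lemma~\ref{lem:NoParallel}. (An intermediate contradiction argument, using maximality of the collection together with the same two lemmas, is needed to see that $\mathcal{D}^*$ is itself special.) You correctly identified Lemma~\ref{lem:NoParallel} as the crucial input, but its role is to rule out nested Seifert circles altogether, not to collapse a plumbing into a connected sum.
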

\begin{proof}
If $\mathcal D$ is special, by Proposition~\ref{prop:Graph}, $K$ is a connected sum of torus knots $T_{2n_i+1,2}$. Since $K$ is strongly quasipositive, each $n_i$ must be positive, so our conclusion holds.

Now we assume that $\mathcal D$ contains at least one nested Seifert circle.
We say a nested Seifert circle is extremal, if one of its complementary regions contains no other nested Seifert circles. 
Let $C_1,\dots,C_m$ be a maximal collection of extremal nested Seifert circles in $\mathcal D$, and let $R_i$ be the complementary region of $C_i$ which contains no other nested Seifert circles. Then $R_1,\dots,R_m$ are mutually disjoint. Let $\mathcal D'$ be the diagram obtained from $\mathcal D$ by Murasugi desumming along $C_1\cup\cdots\cup C_m$. Let $\mathcal D_i$ be the part of $\mathcal D'$ supported in $R_i$, and let \[\mathcal D^*=\mathcal D'\setminus(\cup_{i=1}^m\mathcal D_i).\]
By \cite{GabaiMurasugi}, $\mathcal D^*$ and $\mathcal D_i$ are alternating diagrams representing fibered links.

Since $R_i$ contains no other nested Seifert circles, $\mathcal D_i$ is special. By Lemma~\ref{lem:TwoEdges}, $C_i$ is connected to another circle in $R_i$ by at least a pair of parallel bands.

We claim that $\mathcal D^*$ is special.
Otherwise, let $C$ be an extremal nested Seifert circle, and let $R$ be the complementary region of $C$ which contains no other nested Seifert circles in $\mathcal D^*$. Since $C_1,\dots,C_m$ is a maximal collection of extremal nested Seifert circles, $R$ must contain at least one $C_i$. By Lemma~\ref{lem:TwoEdges}, $C_i$ is connected to another circle in $R\setminus R_i$ (including $C$) by at least a pair of parallel bands.
This is a contradiction to Lemma~\ref{lem:NoParallel}.

Now $\mathcal D^*$ is special. There are at least two Seifert circles in $\mathcal D^*$, since $C_1$ is nested in $\mathcal D$. By Lemma~\ref{lem:TwoEdges}, $C_1$ is connected to another Seifert circle in $\mathcal D^*$ by at least a pair of parallel bands. We again get a contradiction to Lemma~\ref{lem:NoParallel}. Hence $\mathcal D$ does not contain any nested Seifert circle. This finishes our proof.
\end{proof}

\begin{proof}[Proof of Theorem~\ref{thm:Alternating}]
By \cite{OSzAlternating}, $\widehat{HFK}(S^3,K)$ is thin. It follows from Proposition~\ref{prop:Thin} that $K$ is strong quasipositive and fibered. Using Proposition~\ref{prop:SQP}, $K$ is a connected sum of $T_{2n_i+1,2}$. The condition on the Alexander polynomial forces $K$ to be $T_{2g+1,2}$. 
\end{proof}



\end{document}